\newtheorem{theorem}{Theorem}
\newtheorem{lemma}[theorem]{Lemma}
\newtheorem{corollary}[theorem]{Corollary}
\theoremstyle{definition}
\newtheorem{definition}[theorem]{Definition}
\newtheorem{example}[theorem]{Example}
\newtheorem{remark}[theorem]{Remark}
\numberwithin{theorem}{section}
\newcommand*{\addheight}[2][.5ex]{%
  \raisebox{0pt}[\dimexpr\height+(#1)\relax]{#2}%
}
\newcommand{\argmin}{\operatorname{argmin}}
\newcommand{\bR}{\ensuremath{\mathbbm{R}}\xspace}
\newcommand{\bQ}{\ensuremath{\mathbbm{Q}}\xspace}
\newcommand{\bZ}{\ensuremath{\mathbbm{Z}}\xspace}
\newcommand{\bN}{\ensuremath{\mathbbm{N}}\xspace}
\newcommand{\ee}{\operatorname{e}}
\newcommand{\eps}{\varepsilon}
\newcommand{\cosn}{\cos(n)}
\title[$\mu(\pi)$ and powers of ${\protect\cosn}$.]
{The irrationality measure of $\pi$ as seen through the eyes of $\cos(n)$}
\author{Sully F. Chen}
\email{sullyche@usc.edu}
\address{California Polytechnic University, San Luis Obispo \\ University of Southern California}
\author{Erin P. J. Pearse}
\email{epearse@calpoly.edu}
\address{California Polytechnic University, San Luis Obispo}
\date{\today}
\begin{document}

\maketitle

\begin{abstract}
For different values of $\gamma \geq 0$, analysis of the end behavior of the sequence $a_n = \cos (n)^{n^\gamma}$ yields a strong connection to the irrationality measure of $\pi$. We show that if $\limsup |\cos n|^{n^2} \neq 1$, then the irrationality measure of $\pi$ is exactly 2. We also give some numerical evidence to support the conjecture that $\mu(\pi)=2$, based on the appearance of some startling subsequences of $\cos(n)^n$.
\end{abstract}

\section{Introduction}
\subsection{Irrationality Measure}
An irrational number $\alpha$ is considered to be well-approximated by rational numbers iff for any $\nu \in \bN$, there are infinitely many choices of $\frac pq$ that satisfy \eqref{eqn:approximability-inequality}. 
\begin{align}\label{eqn:approximability-inequality}
    \left | \alpha - \frac{p}{q}\right | < \frac{1}{q^\mu}, 
    \qquad \text{for some } \nu \in \bN.
\end{align}
The idea is that if the set
\begin{equation}
    U(\alpha,\nu) = \left\{\frac{p}{q} \in \bQ : 0 < \left | \alpha - \frac{p}{q}\right | < \frac{1}{q^\nu}\right\}
    \label{eqn:U-set}
\end{equation}
is infinite for a fixed $\nu$, then $U(\alpha,\nu)$ contains rationals with arbirarily large denominators and so one can find a sequence of rationals $\frac {p_n}{q_n} \in U(\alpha,\nu)$ which converge to $\alpha$. One could say that this sequence converges to $\alpha$ ``at rate $\nu$''. When this can be accomplished for any $\nu \in \bN$, the number $\alpha$ can be approximated by sequences with arbitrarily high rates of convergence. Such numbers are called \emph{Liouville numbers} after Joseph Liouville who proved that all such numbers are transcendental, thereby giving the first proof of the existence of transcendental numbers \cite{Liouville}. See Appendix A for a proof of the transcendentality of Liouville numbers.

When the number $\alpha$ is not well-approximated by a sequence of rationals, one can grade ``how approximable'' $\alpha$ is in terms of the optimal exponent for which an approximating sequence of rationals may be found, and this leads to the notion of irrationality measure (also variously called the Liouville-Roth irrationality measure, irrationality exponent, approximation exponent, or Liouville-Roth constant); cf. \cite{Bugeaud, Weisstein3}.

\theoremstyle{definition}
\begin{definition}[Irrationality Measure]\label{def:irrationality-measure}
The \emph{irrationality measure} of $\alpha \in \bR \setminus \bQ$ is  
\begin{equation*}
    \mu (\alpha) = \inf \{\nu : |U(\alpha,\nu)| < \infty\}.
\end{equation*}
\end{definition}


This means, for example, that except possibly for at most finitely many ``lucky'' choices of $\frac pq$, every rational approximation $\frac pq$ that gives $n+1$ correct decimal digits of $\alpha$ must satisfy
\[\frac {1}{10^{n}} \geq \left|\alpha-{\frac {p}{q}}\right|\geq {\frac {1}{q^{\mu(\alpha) +\varepsilon }}}\] 
for any $\varepsilon>0$.

\begin{example}
Liouville numbers are precisely those numbers $\alpha$ with $\mu(\alpha)=\infty$; cf. \cite{Bugeaud}. The most well-known example is Liouville's Constant \cite{Weisstein1}, defined as:
\[\alpha = \sum_{k=1}^\infty 10^{-k!} = 0.110001... \; .\] 
One can see by inspection that the partial sums in the $10$-ary expansion of $\alpha$ (more commonly called the \emph{decimal expansion}) of $\alpha$ approach the true value extremely quickly. If $s_n$ denotes the $n^{\textrm{th}}$ partial sum, we have the first few rational approximations and their corresponding error bounds \\
\[\begin{tabular}{|r|l|l|l|l|} \hline
    $n$ & 1 & 2 & 3 & 4  \\ \hline
    $s_n$ & 0.1 & $0.11$ & $0.110001$ & $0.11000100...001$  \\
    $|\alpha-s_n| \leq $ & $2 \times 10^{-2}$ & $2 \times 10^{-6}$ & $2 \times 10^{-24}$ & $2 \times 10^{-120}$   \\ \hline
\end{tabular}\]
Observe that the number of decimal places for which $s_n$ agrees with $\alpha$ increases much faster than $n$. In terms of Definition~\ref{def:irrationality-measure}, we have found that $U(\alpha,2)$ contains infinitely many points. 

In contrast, for $\sqrt2$ (whose irrationality measure is equal to 2), no such patterns exist in the $10$-ary expansion, or indeed, in the $q$-ary expansion of $\sqrt2$ for any $q \in \mathbb{Q}^+$. 
\end{example}

It is clear that there exists a deep underlying meaning and complexity to the irrationality measure of a number, and thus any efforts towards determining the exact values of the irrationality measures of different numbers would yield considerable insight.

It can be shown that $\mu(\alpha) =1$ for any rational number $\alpha \in \mathbb{Q}$,  and that $\mu(\alpha) =2$, for every  algebraic number $\alpha$ of order greater than 1, from Roth's theorem on arithmetic progressions; cf. \cite{Croot}.\footnote{This is a deep result, and Roth's work in this area earned him the 1958 Fields Medal.} However, transcendental numbers may have any irrationality measure greater than or equal to 2. There is no known method to compute or determine the exact value of the irrationality measure of any transcendental number, or even estimate the value. Upper bounds have been proven for many interesting transcendental constants, for example, $\mu (\pi) \leq 7.6063$ \cite{V. Kh. Salikhov1} and $\mu (\ln 3) \leq 5.125$ \cite{V. Kh. Salikhov2}.  Proofs of exact values are rare, and one such gem is $\ee$, for which it is known that $\mu(\ee)=2$ \cite{Bugeaud}. This is perhaps not surprising in light of the fact that almost all numbers have irrationality measure 2.

An exact value for the irrationality measure of the most famous of mathematical constants, $\pi$, has evaded us thus far, though it is widely believed that $\mu(\pi)=2$. Using Corollary~\ref{thm:corollary-to-main}, we provide  further numerical evidence that $\mu(\pi)=2$ in \S\ref{sec:numerics}.

Our main result concerns the following set:
\begin{equation} \label{eq:Phi}
    \Phi = \{\gamma \geq 0 : \limsup \{|\cos k|^{k^\gamma}\}_{k \in \bN} = 1\}.
\end{equation}

\begin{theorem}\label{thm:main-result}
   For $\Phi$ as defined in \eqref{eq:Phi}, we have $\sup \Phi = 2\mu(\pi) - 2$.
\end{theorem}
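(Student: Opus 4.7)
The plan is to reduce $\gamma \in \Phi$ to a Diophantine condition on $\pi$ and then compare with the definition of $\mu(\pi)$. For each $k \in \bN$, let $m_k$ be the nearest integer to $k/\pi$ and set $\delta_k = k - m_k\pi$, so that $|\delta_k| \le \pi/2$, $|\cos k| = |\cos \delta_k|$, and $|\pi - k/m_k| = |\delta_k|/m_k$, with $m_k \sim k/\pi$. A Taylor expansion gives $\log |\cos k|^{k^\gamma} = -\tfrac{1}{2} k^\gamma \delta_k^2 + O(k^\gamma \delta_k^4)$ when $|\delta_k|$ is small. Any subsequence along which $|\cos k|^{k^\gamma} \to 1$ must have $|\delta_k| \to 0$ (otherwise $|\cos k|$ stays bounded below $1$ and the power collapses to $0$), so $\gamma \in \Phi$ is equivalent to the existence of a subsequence $k_n \to \infty$ with $k_n^\gamma \delta_{k_n}^2 \to 0$.

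For the lower bound $\sup\Phi \ge 2\mu(\pi)-2$, fix $\gamma < 2\mu(\pi)-2$ and pick $\mu$ with $(\gamma+2)/2 < \mu < \mu(\pi)$. Since $\mu < \mu(\pi)$, the set $U(\pi,\mu)$ is infinite, yielding distinct rationals $p_n/q_n$ with $|\pi - p_n/q_n| < q_n^{-\mu}$ and $q_n \to \infty$. Setting $k_n = p_n$ gives $|\delta_{k_n}| = q_n|\pi - p_n/q_n| < q_n^{1-\mu}$ and $k_n \sim q_n \pi$, so $k_n^\gamma \delta_{k_n}^2 = O(q_n^{\gamma + 2 - 2\mu}) \to 0$ because the exponent is negative. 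Hence $\gamma \in \Phi$, and letting $\gamma \uparrow 2\mu(\pi)-2$ completes this direction.

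For the reverse inequality, fix $\gamma > 2\mu(\pi)-2$ and pick $\mu$ with $\mu(\pi) < \mu < (\gamma+2)/2$. The key claim is that for all but finitely many $k \in \bN$, $|\delta_k| \ge m_k^{1-\mu}$. Given this, $|\cos k| \le \cos(m_k^{1-\mu})$ and $\log|\cos k|^{k^\gamma} \le -\tfrac12 k^\gamma m_k^{2-2\mu}(1+o(1))$, which is of order $-k^{\gamma+2-2\mu} \to -\infty$; hence $|\cos k|^{k^\gamma} \to 0$, so $\gamma \notin \Phi$. I expect the main technical step to be justifying the claim: if it fails, infinitely many $k$ satisfy $|\pi - k/m_k| < 1/m_k^\mu$. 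Writing each such $k/m_k = p/q$ in lowest terms with $m_k = dq$ gives $|\pi - p/q| < 1/(dq)^\mu \le 1/q^\mu$, so $p/q \in U(\pi,\mu)$. Since $U(\pi,\mu)$ is finite (as $\mu > \mu(\pi)$), some $p/q$ must be realized by infinitely many $k$'s, forcing $d \to \infty$; but $m_k = dq$ equaling the nearest integer to $k/\pi = dp/\pi$ requires $d|p-q\pi| \le \pi/2$, bounding $d$---contradiction.
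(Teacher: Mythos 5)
Your proof is correct and follows essentially the same route as the paper's: both directions rest on writing $k = m_k\pi + \delta_k$ (your $m_k$ is the paper's $Q(k)$), on the quadratic behavior $|\cos \delta_k| \approx 1 - \delta_k^2/2$, and on the two halves of Definition~\ref{def:irrationality-measure} — infinitely many good approximations when the exponent is below $\mu(\pi)$ (giving $\gamma \in \Phi$ for $\gamma < 2\mu(\pi)-2$) and only finitely many when it is above (giving $\gamma \notin \Phi$ for $\gamma > 2\mu(\pi)-2$) — with your logarithmic estimates merely replacing the paper's Bernoulli inequality and Lemma~\ref{thm:phi-identity}. The only substantive difference is that your verification of the key claim $|\delta_k| \ge m_k^{1-\mu}$ for all but finitely many $k$ (handling non-reduced fractions $k/m_k$ and the possibility that a single rational in $U(\pi,\mu)$ is realized by infinitely many $k$, ruled out via $d\,|p-q\pi| \le \pi/2$) actually supplies a detail the paper glosses over when it asserts \eqref{reverse_ineq} directly from the definition.
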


\begin{corollary}\label{thm:corollary-to-main}
    If $\sup \Phi = 2$, then $\mu(\pi)=2$.
\end{corollary}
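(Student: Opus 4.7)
The plan is to observe that this corollary is essentially an immediate algebraic consequence of Theorem \ref{thm:main-result}, so there is very little to do beyond substitution. Specifically, I would assume the hypothesis $\sup \Phi = 2$, invoke the identity $\sup \Phi = 2\mu(\pi) - 2$ furnished by the main theorem, and solve the resulting equation $2\mu(\pi) - 2 = 2$ to deduce $\mu(\pi) = 2$.

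There is one minor point worth checking to confirm the statement is nontrivial rather than vacuous: one should verify that $\mu(\pi) = 2$ is actually a consistent value. This follows from the classical fact that for any irrational real number $\alpha$, Dirichlet's approximation theorem forces $\mu(\alpha) \geq 2$, so in particular $\mu(\pi) \geq 2$; thus the corollary says the hypothesis $\sup\Phi = 2$ pins $\mu(\pi)$ down to the smallest value it could possibly take. I would mention this in a remark rather than in the proof itself, since the equation $2\mu(\pi) - 2 = 2$ already determines $\mu(\pi)$ uniquely.

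There is no real obstacle to the corollary, as all of the analytic content has been absorbed into Theorem \ref{thm:main-result}. The only subtlety is a stylistic one: since the proof is a single line, I would write it as a one-sentence display, e.g., ``By Theorem \ref{thm:main-result}, $\sup\Phi = 2\mu(\pi)-2$; setting this equal to $2$ and solving yields $\mu(\pi) = 2$.'' The hard work has already been done in establishing the main theorem; this corollary is simply the headline consequence.
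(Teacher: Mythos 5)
Your proposal is correct and matches the paper's (implicit) reasoning exactly: the paper states this corollary without proof, treating it as the immediate substitution $\sup\Phi = 2\mu(\pi)-2 = 2 \Rightarrow \mu(\pi)=2$ that you spell out. Your side remark that $\mu(\pi)\geq 2$ always holds is a fine observation but, as you note, not needed for the deduction.
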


\begin{remark}[The relation of $\mu (\pi)$ to the end behavior of special sequences]
The value of $\mu (\pi)$ is closely tied to the behavior and convergence of special sequences of transcendental numbers. For example, Alekseyev proved that the convergence of the Flint Hills series directly implies $\mu (\pi) < 2.5$, in \cite{Max A. Alekseyev}. 

The investigation in the present paper began with a question raised by a student regarding the behavior of the sequence $a_n = \cos(n)^n$. Since $|\cos n|<1$, one might expect that raising this number to a large power would cause $a_n \to 0$. However, preliminary numerical investigations revealed that this is not the case. Indeed, while most values of this sequence are extremely close to 0, certain subsequences form oscillations with amplitude 1. The top of Figure~\ref{fig:1000000} shows the first million terms of the sequence, and the bottom shows a magnification of just the first 100,000 terms, which reveals several rapidly decaying subsequences. In this note, we examine the sequence $a_n = \cos(n)^{n^\gamma}$, for $\gamma \geq 0$, and show that is closely related to the value of $\mu (\pi)$.
\end{remark}

\begin{figure}
    \addheight{
    \begin{tabular}{cc}
    \includegraphics[width=120mm]{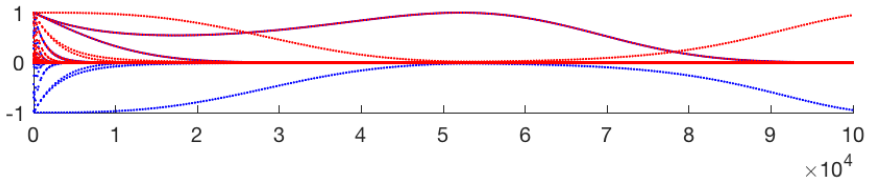}
    \\ 
    \includegraphics[width=120mm]{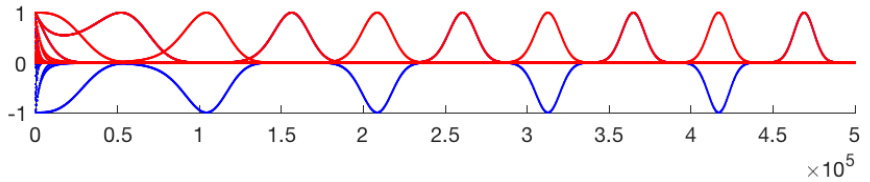}
    \end{tabular}}
      
    \caption{\small The top plot shows the first 20,000 terms of $a_n = \cos(n)^n$; the bottom figure shows the first 1,000,000. The subsequence $a_{2k}$ is plotted in red and the subsequence $a_{2k+1}$ is plotted in blue. 
    Note: each peak actually corresponds to a different subsequence, an arithmetic progression of the form $n_k = 355k + 22j$; see \S\ref{sec:subsequences} for details.
    }
    \label{fig:1000000}
\end{figure}

\section{Proof of the main result}

\begin{definition}\label{def:P_and_Q}
We define a function $Q(n): \bZ \mapsto \bZ$ such that for any integer $n$, $Q(n)$ maps to the integer minimizing $|n - Q(n)\pi|$. In other words, if $[x]$ denotes the nearest integer to $x$,\footnote{Note that $[x]$ is \emph{not} the same as the floor or ceiling function.} then
\begin{equation} \label{eq:Q}
    Q(n) = \left[\frac n\pi\right] = \argmin \{|n-m \pi| : m \in \bZ\}.
\end{equation}
\end{definition}

\subsection*{Outline of the proof of Theorem~\ref{thm:main-result}} 
We prove Theorem~\ref{thm:main-result} by establishing
\begin{subnumcases}{\limsup \{|\cos k|^{k^\gamma}\}_{k \in \bN} =}
    1, & $\gamma < 2\mu(\pi) - 2$, \label{eqn:main-result-g-case-i} \\
    0, & $\gamma > 2\mu(\pi) - 2$. \label{eqn:main-result-g-case-ii}
\end{subnumcases}
The proof will require a couple of technical lemmas, which we state and prove before proceeding. 
From Lemma~\ref{thm:p/Q(p)->pi}, it follows that $Q(n)=\mathcal{O}(n)$, $n \to \infty$, and this is observation used to show \eqref{eqn:main-result-g-case-i}. 

For \eqref{eqn:main-result-g-case-ii}, we establish a bound of the form 
\[|\cos p|^{p^\gamma} \leq \left( 1 - \frac{1}{\phi_0(p)}\right )^{\phi_1(p)}\]
for 
\[\phi_0(p) = \frac{24Q(p)^{4(\mu(\pi) - 1 + \lambda)}}{12Q(p)^{2(\mu(\pi) - 1 + \lambda)} - 1}
\qquad\text{and} \qquad \phi_1(p) = p^\gamma,
\]
and then apply Lemma~\ref{thm:phi-identity}.

\begin{lemma}\label{thm:p/Q(p)->pi}
$\lim_{n \to \infty} \frac{n}{Q(n)} = \pi$.
\end{lemma}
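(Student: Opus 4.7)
The plan is to exploit the definition of $Q(n)$ directly: since $Q(n) = [n/\pi]$ is the nearest integer to $n/\pi$, it differs from $n/\pi$ by at most $\tfrac12$. This quantitative bound will translate immediately into convergence.

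First I would write $Q(n) = n/\pi + \varepsilon_n$ where $|\varepsilon_n| \leq \tfrac12$ by the nearest-integer property. Dividing by $n$ yields
\begin{equation*}
    \left|\frac{Q(n)}{n} - \frac{1}{\pi}\right| = \frac{|\varepsilon_n|}{n} \leq \frac{1}{2n},
\end{equation*}
so $Q(n)/n \to 1/\pi$ as $n \to \infty$.

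Next I would observe that $Q(n) > 0$ for all sufficiently large $n$ (concretely, as soon as $n/\pi > \tfrac12$, i.e. $n \geq 2$), so the reciprocal $n/Q(n)$ is well defined and by continuity of $x \mapsto 1/x$ away from zero, $n/Q(n) \to \pi$. Alternatively one can rewrite
\begin{equation*}
    \frac{n}{Q(n)} = \frac{n}{n/\pi + \varepsilon_n} = \frac{\pi}{1 + \pi\varepsilon_n/n},
\end{equation*}
and note that the denominator tends to $1$ since $|\pi\varepsilon_n/n|\leq \pi/(2n) \to 0$.

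There is no real obstacle here — the statement is a quantitative strengthening of the obvious fact that the nearest integer to $n/\pi$ is very close to $n/\pi$. The only thing to be mildly careful about is ensuring $Q(n) \neq 0$ before dividing, which is handled by taking $n$ large enough. This lemma presumably serves as a stepping stone for the subsequent work connecting $|\cos n|$ to the denominators of rational approximations of $\pi$, so the relevant output for later use will be the rate $|Q(n)/n - 1/\pi| \leq 1/(2n)$ rather than just the bare limit.
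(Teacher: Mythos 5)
Your proof is correct and is essentially the paper's argument made explicit: the paper simply declares the limit ``immediate from Definition~\ref{def:P_and_Q} by Diophantine approximation,'' and the content behind that remark is exactly your nearest-integer bound $|Q(n)-n/\pi|\leq\tfrac12$. Your version is actually more useful, since it records the quantitative rate and the minor point that $Q(n)\neq 0$ for $n\geq 2$, both of which the paper leaves unstated.
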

\begin{proof}
This is immediate from Definition~\ref{def:P_and_Q} by Diophantine approximation.
\end{proof}

\begin{lemma}\label{thm:phi-identity}
Let $\phi_0(x)$ and $\phi_1(x)$ be \bR-valued functions satisfying 
\begin{align*}
    \lim_{x \to \infty} \phi_0(x) = \infty
    \qquad\text{and}\qquad
    \lim_{x \to \infty} \phi_1(x) = \infty,
\end{align*} 
If $\phi_0(x) \neq 0$ for all $x \in \bR$ and $\phi_0 = o(\phi_1)$, as $x \to \infty$, then
\begin{align}\label{eqn:phi-identity}
    \lim_{x \to \infty}\left( 1 - \frac{1}{\phi_0(x)}\right )^{\phi_1(x)} = 0.
\end{align}
\end{lemma}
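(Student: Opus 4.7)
The plan is to reduce the limit to a statement about a logarithm and then invoke the standard asymptotic $\log(1-u) \sim -u$ as $u \to 0$. Since $\phi_0(x) \to \infty$, for all sufficiently large $x$ we have $\phi_0(x) > 1$, which makes $1 - 1/\phi_0(x)$ a positive number in $(0,1)$; in particular, taking logarithms is legitimate on some tail $[x_0, \infty)$.

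On that tail, I would rewrite
\begin{equation*}
    \left(1 - \frac{1}{\phi_0(x)}\right)^{\phi_1(x)} = \exp\left(\phi_1(x)\, \log\!\left(1 - \frac{1}{\phi_0(x)}\right)\right),
\end{equation*}
so it suffices to show that the exponent tends to $-\infty$. Setting $u = 1/\phi_0(x)$, the Taylor expansion $\log(1-u) = -u + O(u^2)$ (valid for $|u| < 1/2$, say) gives
\begin{equation*}
    \phi_1(x)\,\log\!\left(1 - \frac{1}{\phi_0(x)}\right) = -\frac{\phi_1(x)}{\phi_0(x)} + \phi_1(x) \cdot O\!\left(\frac{1}{\phi_0(x)^2}\right).
\end{equation*}

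The hypothesis $\phi_0 = o(\phi_1)$ means $\phi_1(x)/\phi_0(x) \to \infty$, so the first term diverges to $-\infty$. For the error term, note that $\phi_1(x) \cdot O(1/\phi_0(x)^2) = O\!\bigl(\tfrac{1}{\phi_0(x)}\cdot \tfrac{\phi_1(x)}{\phi_0(x)}\bigr)$, which is negligible compared to the main term $-\phi_1(x)/\phi_0(x)$ because $1/\phi_0(x) \to 0$. Therefore the whole exponent tends to $-\infty$, and exponentiating gives the claimed limit of $0$.

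I do not expect a serious obstacle here; the only subtlety is bookkeeping to ensure (i) $1 - 1/\phi_0(x) > 0$ on a tail so that the logarithm is defined (immediate from $\phi_0(x) \to \infty$), and (ii) that the Taylor error term is actually dominated by the linear term, which follows because the ratio of the error to the main term is $O(1/\phi_0(x)) = o(1)$. Both are routine consequences of $\phi_0(x) \to \infty$ and $\phi_0 = o(\phi_1)$.
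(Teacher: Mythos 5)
Your proof is correct, and at its core it rests on the same comparison as the paper's: both arguments reduce everything to the fact that $\phi_0 = o(\phi_1)$ forces $\phi_1(x)/\phi_0(x) \to \infty$, so that $\exp(-\phi_1(x)/\phi_0(x)) \to 0$. The execution differs in a way worth noting. The paper never takes logarithms: it invokes the elementary inequality $(1+x)^r \le \ee^{rx}$ (applied with $x = -1/\phi_0(x)$ and $r = \phi_1(x) > 0$), which yields in one line the upper bound $\left(1 - \frac{1}{\phi_0(x)}\right)^{\phi_1(x)} \le \exp\left(-\frac{\phi_1(x)}{\phi_0(x)}\right)$; since the left-hand side is eventually positive, squeezing between $0$ and this bound finishes the proof with no error terms to control. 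You instead exponentiate the logarithm and expand $\log(1-u) = -u + O(u^2)$, which obliges you to verify that the quadratic error is dominated by the linear term --- your point (ii). Your bookkeeping there is sound: the ratio of error to main term is $O(1/\phi_0(x)) = o(1)$, so the exponent is $-(1+o(1))\,\phi_1(x)/\phi_0(x) \to -\infty$. (One could even shortcut your error estimate by noting $\log(1-u) \le -u$ for $0 \le u < 1$, which recovers exactly the paper's bound.) What your route buys is slightly more information --- the exact asymptotics of the exponent, hence the decay rate of the expression --- at the cost of the extra Taylor-error check; the paper's one-sided inequality is shorter precisely because, for a limit of $0$, an upper bound is all that is needed.
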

\begin{proof}
Observe that $r(r-1)(r-2)\dots(r-n+1) \leq r^n$, so comparing the Taylor series for $(1+x)^r$ and $\ee^{rx}$ gives
\[(1 + x)^r \leq \ee^{rx}.\]
Now making the substitutions $x = -\frac{1}{\phi_0(x)}$ and $r = \phi_1(x)$, we obtain
\[\left ( 1 - \frac{1}{\phi_0(x)}\right )^{\phi_1(x)} \leq \exp \left (-\frac{\phi_1(x)}{\phi_0(x)} \right ),
\]
and the right side tends to 0 as $x \to \infty$ because $\phi_0 = o(\phi_1)$.
\end{proof}

\begin{proof}[Proof of Theorem~\ref{thm:main-result}]
\underline{Step 1:} we prove \eqref{eqn:main-result-g-case-i}.  
%
By Definition~\ref{def:irrationality-measure}, 
there are infinitely many integers $p,q > 0$ such that:
\begin{equation*}
0 < \left | \pi - \frac{p}{q}\right | < \frac{1}{q^{\mu(\pi) - \eps}},  \quad \eps > 0.
\end{equation*}
We will make repeated use of the following Taylor approximation: 
\begin{align}\label{eqn:cosine-Taylor}
    | \cos x | \geq 1 - \frac{x^2}{2}, \quad \text{for $x \in \bR$}.
\end{align}
Choosing arbitrary integers $p$ and $Q(p)$ satisfying the above inequality and combining the above inequalities, we have:
\begin{align}
    |\cos p| = |\cos (p-\pi Q(p))| 
    &\geq 1 - \frac{(p - \pi Q(p))^2}{2} 
    > 1 - \frac{1}{2Q(p)^{2(\mu(\pi)-1-\eps)}} \label{eqn:proof-der-1}
\end{align}
Raising \eqref{eqn:proof-der-1} to $p^\gamma$ and applying 
the Bernoulli inequality we have:
\begin{equation}\label{eqn:Bernoullified}
    |\cos p|^{p^\gamma} 
    \geq \left ( 1 - \frac{1}{2Q(p)^{2(\mu(\pi)-1-\eps)}} \right )^{p^\gamma} 
    > 1 - \frac{p^\gamma}{2Q(p)^{2(\mu(\pi)-1-\eps)}}
\end{equation}
%
%
Observe $\limsup |\cos k|^{k^\gamma} \leq 1$. Let $\gamma < 2(\mu(\pi)-1 - \eps)$. We have $p^\gamma = o(Q(p)^{2(\mu(\pi) - 1 -\eps)})$ because $Q(p) = \mathcal{O}(p)$ by Lemma~\ref{thm:p/Q(p)->pi}. Since we can choose arbitrarily large $p$ and $Q(p)$ satisfying \eqref{eqn:approximability-inequality}, we can bring $|\cos p|^{p^\gamma}$ arbitrarily close to $1$ by \eqref{eqn:Bernoullified}. This establishes \eqref{eqn:main-result-g-case-i}. 
\\

\underline{Step 2:} we show \eqref{eqn:main-result-g-case-ii}. 
First, observe that $|\cos x| \leq 1 - \frac{x^2}{2} + \frac{x^4}{24}$ for $x \in [-\frac{\pi}{2}, \frac{\pi}{2}]$. Here, the interval $[-\frac{\pi}{2}, \frac{\pi}{2}]$ is chosen arbitrarily; any interval which is small enough to ensure the inequality would suffice (the inequality does not hold for all $x$). Fix $\eps > 0$ and let $\gamma = 2(\mu(\pi) - 1 + \eps)$. Then for arbitrary integers $p$ and $Q(p)$, $|p - \pi Q(p)| \leq \frac{\pi}{2}$,  we have:
\begin{equation}\label{gammatozeroeq}
|\cos p|^{p^\gamma} \leq \left ( 1 - \frac{(p-\pi Q(p))^2}{2} + \frac{(p-\pi Q(p))^4}{24}\right )^{p^\gamma}
\end{equation}
By Definition~\ref{def:irrationality-measure}, for any real number $\lambda > 0$, there exists an integer $N$ such that 
\begin{equation}\label{reverse_ineq}
|p-q\pi| > \frac{1}{q^{\mu(\pi) - 1 + \lambda}},
\qquad \text{for all } p,q > N.
\end{equation}
Since $1 - \frac{x^2}{2} + \frac{x^4}{24}$ is monotonically increasing on $[-\frac{\pi}{2}, 0)$ and monotonically decreasing on $(0, \frac{\pi}{2}]$, for $p, Q(p) > N$, we have:
\begin{align}
    |\cos p|^{p^\gamma} 
    &\leq \left ( 1 - \frac{(p-\pi Q(p))^2}{2} + \frac{(p-\pi Q(p))^4}{24}\right )^{p^\gamma} \nonumber \\
    &\leq \left ( 1 - \frac{1}{2Q(p)^{2(\mu(\pi) - 1 + \lambda)}} + \frac{1}{24Q(p)^{4(\mu(\pi) - 1 + \lambda)}} \right )^{p^\gamma} \nonumber \\
    &= \left ( 1 - \frac{12Q(p)^{2(\mu(\pi) - 1 + \lambda)} - 1}{24Q(p)^{4(\mu(\pi) - 1 + \lambda)}} \right )^{p^\gamma}
    \label{eq:upper-bound}
\end{align}
Observe that if we define a function $\phi(p)$ such that:
\begin{align*}
    \frac{1}{\phi(p)} 
    = \frac{12Q(p)^{2(\mu(\pi) - 1 + \lambda)} - 1}{24Q(p)^{4(\mu(\pi) - 1 + \lambda)}},
\end{align*}
then $\phi(p) = \mathcal{O}(Q(p)^{2(\mu(\pi)-1+\lambda})$. If $\gamma > 2(\mu(\pi) - 1 + \lambda)$, then we have $\phi(Q(p)) = o(p^\gamma)$. Since we can choose arbitrarily large $p$ and $Q(p)$ satisfying \eqref{reverse_ineq}, by Lemma~\ref{thm:phi-identity} we can bring \eqref{gammatozeroeq} arbitrarily close to $0$.

Thus $\limsup \{|\cos k|^{k^\gamma}\}_{k \in \bN} = 0$ for all $\gamma > 2(\mu(\pi) - 1 + \lambda)$.
Since our choice of $\lambda$ is arbitrary but greater than zero, this establishes \eqref{eqn:main-result-g-case-ii}. 
By  \eqref{eqn:main-result-g-case-i}--\eqref{eqn:main-result-g-case-ii} 
the proof is complete.
\end{proof}

\section{Further investigations}



\begin{theorem}
   $\limsup |\cos n|^{n^2} \geq \exp(-\pi^2/2) \approx 0.007192$.
\end{theorem}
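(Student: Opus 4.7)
The plan is to exhibit an infinite subsequence $n_k \to \infty$ along which $|\cos n_k|^{n_k^2} \geq \exp(-\pi^2/2 - o(1))$; this subsequential lower bound alone forces $\limsup |\cos n|^{n^2} \geq \exp(-\pi^2/2)$. The natural choice for $n_k$ is the numerator $p_k$ of the $k$-th convergent of the continued fraction of $\pi$ (or any sequence supplied by Dirichlet's approximation theorem): pairs of positive integers $(p_k, q_k)$ with $q_k \to \infty$ satisfying $|p_k - q_k \pi| < 1/q_k$. For such pairs $Q(p_k) = q_k$ for all sufficiently large $k$, and Lemma~\ref{thm:p/Q(p)->pi} guarantees $p_k/q_k \to \pi$.

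For each $n_k = p_k$, the periodicity of $|\cos|$ gives $|\cos p_k| = |\cos(p_k - \pi q_k)|$. Since $|p_k - \pi q_k| < 1/q_k < \pi/2$ for large $k$, the Taylor bound $|\cos x| \geq 1 - x^2/2$ applies, yielding $|\cos p_k| \geq 1 - \tfrac{1}{2}(p_k - \pi q_k)^2$. Raising to $p_k^2$, taking logarithms, and using $\log(1-y) \geq -y - y^2$ on $[0,1/2]$ (with $y = (p_k - \pi q_k)^2/2$, which certainly lies there) produces
\[
    p_k^2 \log |\cos p_k|
    \;\geq\; -\tfrac{1}{2}\, p_k^2 (p_k - \pi q_k)^2 \;-\; \tfrac{1}{4}\, p_k^2 (p_k - \pi q_k)^4.
\]

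The two error terms are then easy to control. From $|p_k - \pi q_k| < 1/q_k$ and $p_k/q_k \to \pi$, one obtains $p_k^2(p_k - \pi q_k)^2 \leq (p_k/q_k)^2 \to \pi^2$, while $p_k^2 (p_k - \pi q_k)^4 \leq (p_k/q_k)^2 / q_k^2 \to 0$. Exponentiating gives $|\cos p_k|^{p_k^2} \geq \exp(-\pi^2/2 - o(1))$, and letting $k \to \infty$ completes the proof.

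The one delicate point is that the Dirichlet rate $|p_k - \pi q_k| < 1/q_k$ is \emph{exactly} what is needed: the trivial bound $|p_k - \pi q_k| < 1$ would yield nothing useful, while the sharper rate $1/q_k$ makes the dominant error $p_k^2 (p_k - \pi q_k)^2$ bounded by the constant $\pi^2$ in the limit, and automatically forces the higher-order correction $p_k^2(p_k - \pi q_k)^4$ to vanish. This is precisely the balance at the threshold $\gamma = 2\mu(\pi)-2$ identified in Theorem~\ref{thm:main-result}.
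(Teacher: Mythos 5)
Your proof is correct and takes essentially the same approach as the paper's: Dirichlet approximation supplies infinitely many pairs with $|p_k - \pi q_k| < 1/q_k$, periodicity reduces $|\cos p_k|$ to $|\cos(p_k - \pi q_k)|$, the Taylor bound $|\cos x| \geq 1 - x^2/2$ gives the key lower estimate, and $p_k/q_k \to \pi$ produces the constant $\exp(-\pi^2/2)$. The only cosmetic difference is that you extract the exponential limit by taking logarithms and using $\log(1-y) \geq -y - y^2$, whereas the paper squeezes $\left(1 - \tfrac{1}{2Q(p)^2}\right)^{p^2}$ between $\left(1 - \tfrac{\pi^2 \mp \varepsilon}{2p^2}\right)^{p^2}$.
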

\begin{proof}
By Diophantine approximation and \eqref{eqn:cosine-Taylor}, \eqref{eqn:proof-der-1} becomes:
\begin{align}
    |\cos p| = |\cos (p-\pi Q(p))| 
    &\geq 1 - \frac{(p - \pi Q(p))^2}{2} 
    > 1 - \frac{1}{2Q(p)^2} \label{eqn:proof-der-1-mu2}
\end{align}

Raising \eqref{eqn:proof-der-1-mu2} to the $p^2$, we obtain
\begin{align}
    |\cos p|^{p^2} \geq \left ( 1 - \frac{1}{2Q(p)^2} \right )^{p^2}.
    \label{eqn:proof-der-2-mu2}
\end{align}
Fix $\eps > 0$. Since $\frac{p}{Q(p)} \to \pi$ by Lemma~\ref{thm:p/Q(p)->pi}, we have $\pi^2-\eps < \frac{p^2}{Q(p)^2} < \pi^2 + \eps$ for all sufficiently large $p$, whence
\begin{align*}
    \left(1-\frac{\frac{\pi^2-\eps}{2}}{p^2}\right)^{p^2}
    > \left(1-\frac{p^2}{2p^2Q(p)^2}\right)^{p^2}
    > \left(1-\frac{\frac{\pi^2+\eps}{2}}{p^2}\right)^{p^2}.
\end{align*}
It follows that the limit of the right side of \eqref{eqn:proof-der-2-mu2} is $\exp(-\pi^2/2)$.
\end{proof}



\subsection{Cosine Identities}
We manipulate the following identity to obtain insight on the sequence:
\begin{align} 
\prod_{k=1}^n \cos \theta_k & = \frac{1}{2^n}\sum_{e\in S} \cos(e_1\theta_1+\cdots+e_n\theta_n), 
\label{eqn:cosine-identity}
\end{align}
where $S=\{1,-1\}^n$. 

\begin{theorem}\label{cosine-identity-estimate}
We have the estimate
\begin{align}
    \cos(n)^{n^2} 
    \geq 
    1 - \frac{n^{4-2\mu(\pi)}}{2}. \label{eq:quad_sum} 
\end{align}
\end{theorem}
\begin{proof}
    Choosing $\theta_k=n$ and $S=\{1,-1\}^{n^2}$ in \eqref{eqn:cosine-identity}, the identity becomes:
    \begin{align*} 
     \cos (n)^n & = \frac{1}{2^n}\sum_{e\in S} \cos((e_1+\cdots+e_n)n) 
    = \frac{1}{2^n}\sum_{i = 0}^{n} \binom{n}{i}\cos\left((2i-n)n\right).
    \end{align*}
    Observe that for $\cos(n)^{n^2}$, we have:
    \begin{equation}
    \cos(n)^{n^2} = \frac{1}{2^{n^2}}\sum_{i = 0}^{{n^2}} \binom{n^2}{i}\cos\left((2i-n^2)n\right) \label{eq:cos_sum}
    \end{equation}
    Applying the cosine approximation \eqref{eqn:cosine-Taylor} and writing $Q(n) = \left[\frac n\pi\right]$, for large choices of $n$ which satisfy \eqref{eqn:approximability-inequality}, we have:
    \begin{align} 
        \cos(n)^{n^2} 
        & \geq \frac{1}{2^{n^2}}\sum_{i = 0}^{{n^2}} \binom{n^2}{i}\left(1- \frac{(2i-n^2)^2(n - \pi \left[\frac n\pi\right])^2}{2}\right) \notag \\
        &= \frac{1}{2^{n^2}}\sum_{i = 0}^{{n^2}} \left( \binom{n^2}{i} - \binom{n^2}{i}\frac{(2i-n^2)^2}{2}(n - \pi \left[\tfrac n\pi\right])^2\right) \notag \\
        &=1-\frac{(n - \pi Q(n))^2}{2\cdot2^{n^2}}\sum_{i = 0}^{{n^2}}\binom{n^2}{i}(2i-n^2)^2 \notag \\
        &= 1- \frac{n^2(n-\pi Q(n))^2}{2} 
    \end{align}
    where we have used the identity $\sum_{i = 0}^{{n^2}}\binom{n^2}{i}(2i-n^2)^2 = n^2 2^{n^2}$ in the last step. Definition~\ref{def:irrationality-measure} then yields 
    \[\cos(n)^{n^2} 
    \geq 1 - \frac{n^2}{2n^{2\mu(\pi)-2}}\]
    and \eqref{eq:quad_sum} follows. 
\end{proof}

From \eqref{eq:quad_sum}, we can see that the existence of a lower bound (as we pick larger $n$ which better approximate multiples of $\pi$) is determined by the growth rate of the second term. If the rate is $\mathcal{O}(1)$, then there is a positive lower bound. If the growth rate is $o(1)$, then the lower bound is 1, and the limsup is also 1.

Additionally, we can get a more precise lower bound of \eqref{eq:quad_sum} by adding more terms to the Taylor expansion (note that we continue choose large values of $n$ which satisfy \eqref{eqn:approximability-inequality}):
\begin{align}
    \cos(n)^{n^2}
    &= 1 - \frac{n^2(n-\pi Q(n))^2}{2!} + \frac{n^2(3n^2-2)(n-\pi Q(n))^4}{4!} \notag \\
    &\rule{20mm}{0mm} - \frac{n^2(16 + 15 n^2 (n^2 - 2))(n-\pi Q(n))^6}{6!}... 
\end{align}
where all of the identities of the form $\sum_{i=0}^{n^2}\binom{n^2}{i}(2i-n^2)^a$ were computed with Mathematica. 
Thus, we have shown the following.
\begin{corollary}
The bound in Theorem~\ref{cosine-identity-estimate} can be improved to
\begin{align}
    \cos(n)^{n^2}
    &\geq 1 - \frac{n^2}{2!n^{2\mu(\pi) - 2}} + \frac{n^2(3n^2-2)}{4!n^{4\mu(\pi) - 4}} - \frac{n^2(16 + 15 n^2 (n^2 - 2))}{6!n^{6\mu(\pi) - 6}}... \label{eq:full_expansion}
\end{align}
\end{corollary}
Now it is clear from \eqref{eq:full_expansion} that if we assume $\mu(\pi) = 2$, then only the highest coefficient of the numerator will remain when we take the $\limsup$:
\begin{align}
    \limsup |\cos(n)^{n^2}|
    &\geq 1 - \frac{1}{2!} + \frac{3}{4!} - \frac{15}{6!} + \frac{105}{8!} - \frac{945}{10!}... \approx 0.6065 \label{eq:full_limsup}
\end{align}
Note that the numerator coefficients we use in \eqref{eq:full_limsup} were from identities found using Mathematica. A full table of these coefficients up to denominator $16!$ is shown below:


\begin{center}
\begin{tabular}{ |c|c| } 
 \hline
 Denominator & Numerator \\
 \hline
 $2!$ & $-1$ \\ 
 $4!$ & $3$ \\
 $6!$ & $-15$ \\ 
 $8!$ & $105$ \\ 
 $10!$ & $-945$ \\ 
 $12!$ & $10395$ \\ 
 $14!$ & $-135135$ \\
 $16!$ & $2027025$ \\ 
 \hline
\end{tabular}
\end{center}

Since we are determining these coefficients through Mathematica, we do not know the growth rate of the coefficients. It is possible that the series could diverge, but as long as we truncate the series at a negative term, the lower bound holds.

\begin{corollary}
   $\limsup |\cos n|^{n^2} \geq 0.6065...$.
\end{corollary}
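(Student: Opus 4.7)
The plan is to observe that this is essentially a cleaner restatement of the series lower bound in \eqref{eq:full_limsup}, once the numerical value is identified in closed form. Using the double-factorial identity $(2k-1)!! = (2k)!/(2^k k!)$, the $k$-th summand of \eqref{eq:full_limsup} simplifies to $(-1/2)^k/k!$, so the infinite series in fact sums to $\ee^{-1/2} \approx 0.6065$. This is not an accident: it reflects the heuristic $\cos(x)^{n^2} \approx \exp(-n^2 x^2/2)$ with $x = n - \pi Q(n)$, in the regime where $n\,|n - \pi Q(n)|$ is of order one.

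With the closed form in hand, the bulk of the work is already done in the preceding derivation. I would take the $\limsup$ termwise in \eqref{eq:full_expansion} (with $\mu(\pi) = 2$, so that each ratio $n^{2k}/n^{2k(\mu(\pi)-1)}$ equals $1$), extract the leading monomial of each binomial-sum coefficient from the tabulated values, and truncate the resulting alternating series at a negative term to preserve the inequality. Summing then yields $\ee^{-1/2}$.

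The main obstacle is making the termwise lower bound rigorous. Because the coefficients $(2k-1)!!$ are generated numerically without an a priori growth estimate, the alternating-series remainder bound cannot be invoked uncritically, and one must justify truncation term-by-term. A cleaner alternative, which I would pursue, is to bound $\cos(x)^{n^2}$ directly below by $\exp(-n^2 x^2/2 - C n^2 x^4)$ for $|x|$ small and then pass to a sequence of approximations with $n|n - \pi Q(n)| \to 1$, sidestepping the convergence question entirely. The secondary caveat, which the statement should probably make explicit, is that the bound is conditional on $\mu(\pi) = 2$: without that assumption, the same argument only recovers the weaker unconditional $\ee^{-\pi^2/2}$ bound of the previous theorem, since standard Diophantine input (Dirichlet, Hurwitz) controls $n\,|n - \pi Q(n)|$ only by a constant strictly greater than one.
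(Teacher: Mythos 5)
Your closed-form identification of the series in \eqref{eq:full_limsup} is correct and is genuinely useful: the tabulated numerators $1,3,15,105,\dots$ are the double factorials $(2k-1)!!$, and since $(2k-1)!!/(2k)! = 1/(2^k k!)$ the series is $\sum_{k\geq 0}(-1/2)^k/k! = \ee^{-1/2} \approx 0.6065$. This settles the convergence question that the paper leaves open (it worries that the Mathematica-generated coefficients might grow too fast) and explains where the number comes from.

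However, your proposal has a genuine gap, and it is exactly the idea that constitutes the paper's proof: you treat the bound as \emph{conditional} on $\mu(\pi)=2$, and you explicitly recommend weakening the statement to say so, asserting that without that hypothesis one only recovers $\ee^{-\pi^2/2}$. That is backwards. The paper's proof is an unconditional two-case dichotomy: since $\pi$ is irrational, $\mu(\pi)\geq 2$. If $\mu(\pi)=2$, the series bound \eqref{eq:full_limsup} gives $\limsup|\cos n|^{n^2}\geq 0.6065$. If instead $\mu(\pi)>2$, then $\gamma=2 < 2\mu(\pi)-2$, so by \eqref{eqn:1_for_g<2mu} (the first half of Theorem~\ref{thm:main-result}) we get $\limsup|\cos n|^{n^2}=1$, which is stronger still. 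A larger irrationality measure means integers lie \emph{closer} to multiples of $\pi$, pushing the limsup up to $1$, not down toward $\ee^{-\pi^2/2}$; so the case you thought forced conditionality is in fact the trivial case, and the stated corollary holds with no hypothesis on $\mu(\pi)$ at all. (A separate caveat, which your proposal shares with the paper rather than fixes: in the case $\mu(\pi)=2$, the definition of irrationality measure only guarantees infinitely many approximations for exponents strictly below $2$, so producing a sequence with $n\,|n-\pi Q(n)|\leq 1$, or $\to 1$ as you propose, requires Hurwitz-type input where the constant matters; but this affects both arguments equally and is not the distinguishing gap.)
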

\begin{proof}
$\mu(\pi) \geq 2$, by transcendentality of $\pi$. If $\mu(\pi) = 2$, then $\limsup |\cos n|^{n^2} \geq 0.6065...$ by \eqref{eq:full_limsup}. If $\mu(\pi) > 2$, then $\limsup |\cos n|^{n^2} = 1$ by Theorem \ref{thm:main-result}. This completes the proof.
\end{proof}

\begin{figure}
    \addheight{\includegraphics[width=60mm]{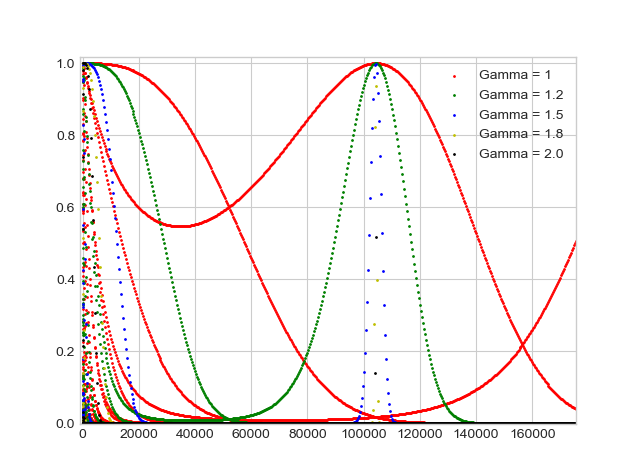}
    \qquad
    \includegraphics[width=60mm]{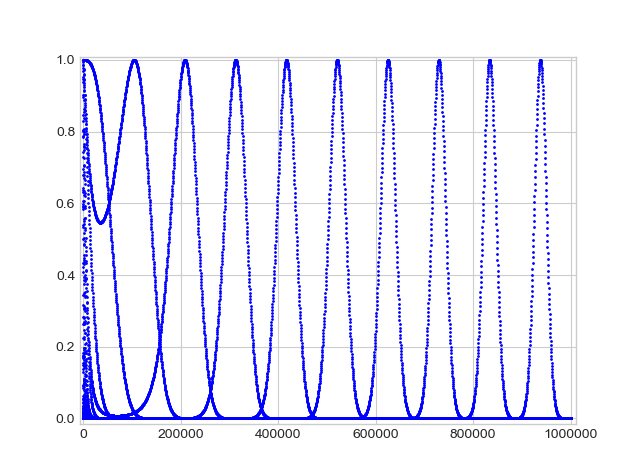}}
      
    \caption{\small Varying values of $\gamma$, plotted from 0 to 170000.   $\gamma = 1$ plotted from 0 to $10^6$.}
    \label{fig:plots_row1}
\end{figure}
\begin{figure}
    \addheight{\includegraphics[width=60mm]{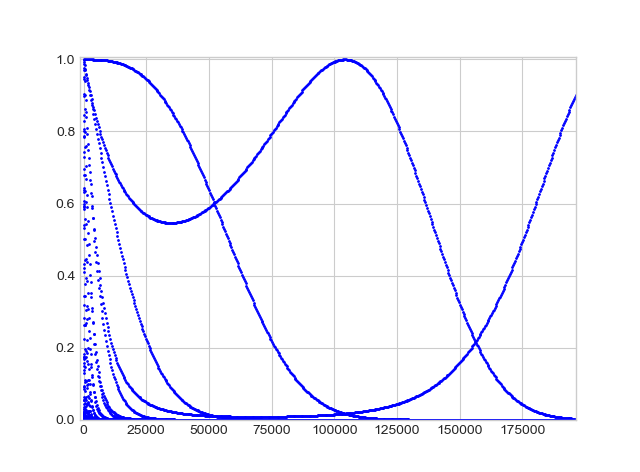} 
    \qquad
    \includegraphics[width=60mm]{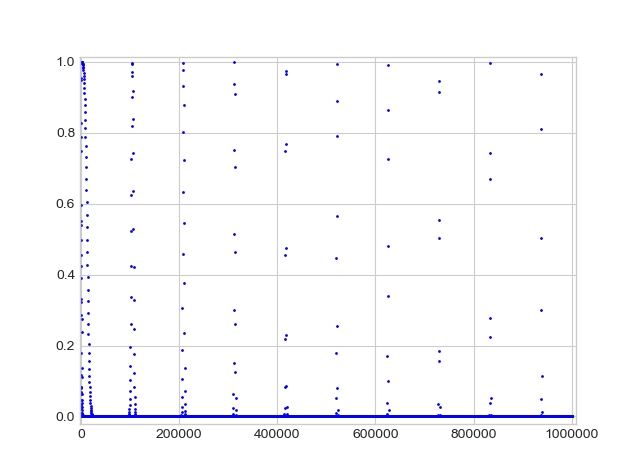}} \\
    \caption{\small $\gamma = 1$ plotted from 0 to 200000.  $\gamma = 1.5$ plotted from 0 to $10^6$.}
    \label{fig:plots_row2}
\end{figure}

\section{Numerical results}
\label{sec:numerics}
Computationally, we find values of $|\cos k|^{k^\gamma}$ very close to one for values of $k$ on the order of $10^8$, with values of $\gamma$ very close to 2. When we set $\gamma = 2$, we do not find values of $k$ close to one. This is evidence that $\mu(\pi) = 2$, but a proof has evaded us so far. Various values of $\gamma$ are plotted in Figure~\ref{fig:plots_row1} and Figure~\ref{fig:plots_row2}.

\noindent
We see that for larger values of $\gamma$, points near one get more sparse but still reach values very close to 1. In the case of $\gamma = 1$, there are interesting subsequences, some of which persist quasiperiodically or die out entirely. Investigations of these subsequences are an interesting subtopic which is open to exploration. Figure~\ref{fig:plots_4up} shows plots of $\gamma$ near and above 2.

\begin{figure}
\noindent
\begin{tabular}{cc}
      \addheight{\includegraphics[width=60mm]{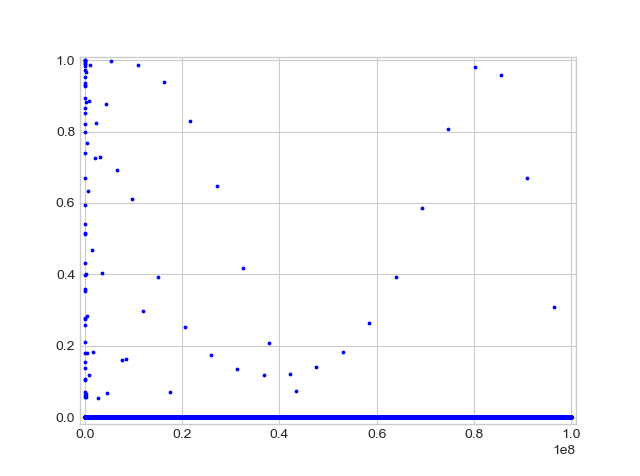}} &
      \addheight{\includegraphics[width=60mm]{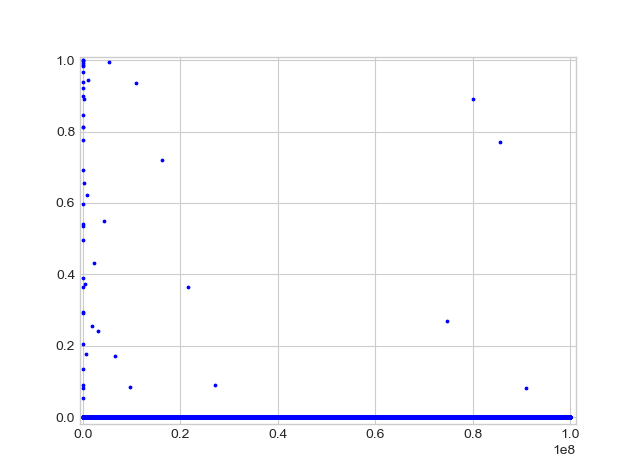}} \\
      \small $\gamma = 1.8$ & $\gamma = 1.9$ \\
\end{tabular}\\
\noindent
\begin{tabular}{cc}
      \addheight{\includegraphics[width=60mm]{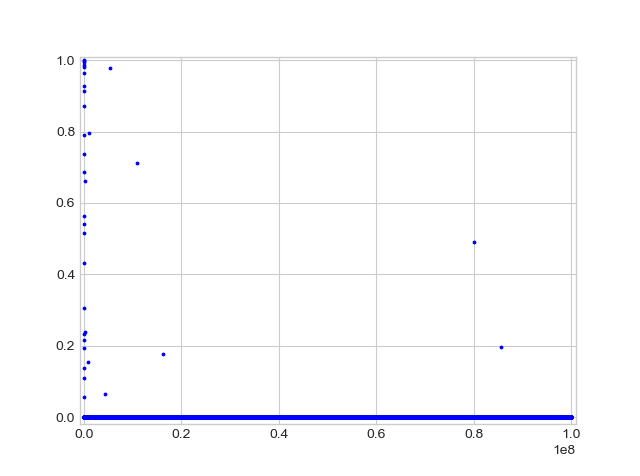}} &
      \addheight{\includegraphics[width=60mm]{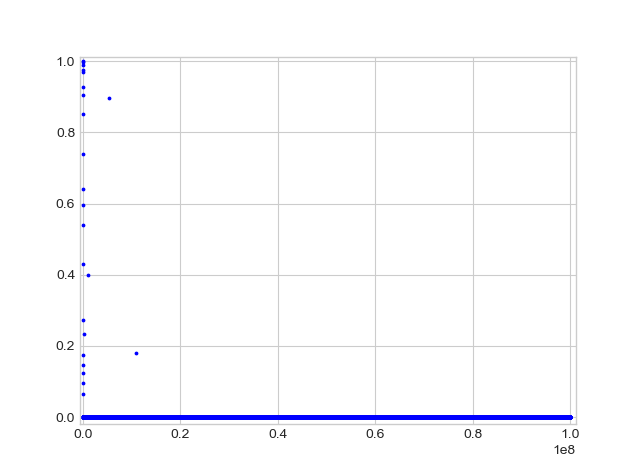}} \\
      \small $\gamma = 2.0$ &  $\gamma = 2.1$ \\
\end{tabular}\\
 \caption{\small Plots of $\left ( \cos n \right )^{n^\gamma}$, for $\gamma \approx 2$.}
    \label{fig:plots_4up}
\end{figure}

When $\gamma$ is just below two, we still see subsequences that are close to $1$, but they become more sparse as $\gamma$ is closer to $2$. In the case where $\gamma = 2.1$, we see that the values approach zero, and do not appear to take on values close to one in the range sampled, which is expected if $\mu(\pi) = 2$.

\section{Subsequences of \protect{$( \cos n )^{n^\gamma}$}}
\label{sec:subsequences}

Various interesting subsequences appear in $\left ( \cos n \right )^{n^\gamma}$, and the following theorems shed some light on the topic.

\begin{theorem}\label{thm:main-subsequence-theorem}
   For all $0 \leq \gamma < 2(\mu(\pi) - 1)$, for any real number $\alpha \in (0, 1)$, there exist infinitely many arbitrarily long arithmetic progressions $\mathbf{S}$ such that for all $a_n \in \mathbf{S}$, $\left |\cos a_n \right|^{a_n^\gamma} > \alpha$.
\end{theorem}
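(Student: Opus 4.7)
\medskip
\noindent
\textbf{Plan of proof.}
Fix $\alpha \in (0,1)$ and $\gamma \in [0, 2(\mu(\pi)-1))$, and choose $\eps > 0$ small enough that
\[ s := 2(\mu(\pi)-1-\eps) - \gamma > 0. \]
By Definition~\ref{def:irrationality-measure}, there are infinitely many pairs $(p,q) \in \bN^2$ satisfying $0 < \delta_q := |p - q\pi| < q^{-(\mu(\pi) - 1 - \eps)}$, along which $q \to \infty$. For each such pair, the plan is to exhibit the arithmetic progression of common difference $p$,
\[ \mathbf{S} := \{p,\, 2p,\, \ldots,\, Lp\}, \]
where the length $L = L(q)$ is specified below.

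For every integer $j$, the identity $\cos(jp) = (-1)^{jq}\cos(j(p - q\pi))$ combined with the Taylor bound \eqref{eqn:cosine-Taylor} yields $|\cos(jp)| \geq 1 - \tfrac{1}{2} j^2 \delta_q^2$. Raising to the $(jp)^\gamma$ power and invoking Bernoulli's inequality in the same form used at \eqref{eqn:Bernoullified},
\[ |\cos(jp)|^{(jp)^\gamma} \geq 1 - \tfrac{1}{2}(jp)^\gamma j^2 \delta_q^2 \geq 1 - C\, j^{\gamma+2}\, q^{-s}, \]
where $C = C(\gamma)$ is a constant (using $p \leq 2\pi q$ for $q$ large, via Lemma~\ref{thm:p/Q(p)->pi}). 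Requiring the right-hand side to exceed $\alpha$ uniformly in $j \leq L$ reduces to a single condition $L^{\gamma+2} q^{-s} < (1-\alpha)/C$, so taking
\[ L(q) := \lfloor K\, q^{s/(\gamma+2)} \rfloor \]
with $K = K(\alpha,\gamma)$ suitable therefore suffices. Since $L(q) \to \infty$ as $q \to \infty$ and distinct approximations $(p,q)$ produce distinct common differences $p$, this yields infinitely many such progressions of unbounded length, which is the desired conclusion.

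The main technicality is exponent bookkeeping. One must verify that Bernoulli's inequality $(1-x)^r \geq 1 - rx$ is actually applicable across the whole progression, i.e.\ that $j^2 \delta_q^2 \leq 2$ for every $j \leq L(q)$. A short check shows this is compatible with the choice $L(q) = \lfloor K q^{s/(\gamma+2)}\rfloor$ because $s/(\gamma+2) \leq \mu(\pi)-1-\eps$ for all $\gamma \geq 0$, which forces $L(q) \cdot \delta_q$ to stay bounded as $q \to \infty$. Conceptually, no new idea is required beyond Theorem~\ref{thm:main-result}: the point is that a single good rational approximation $p/q \approx \pi$ produces not merely one integer $k$ at which $|\cos k|^{k^\gamma}$ is close to $1$, but the entire arithmetic progression $\{jp\}_{j=1}^{L(q)}$ of them, with $L(q)$ growing as a power of $q$.
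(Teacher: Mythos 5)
Your proof is correct and follows essentially the same route as the paper's: both take the arithmetic progression of multiples of the numerator $p$ of a good rational approximation to $\pi$, apply the Taylor bound \eqref{eqn:cosine-Taylor} termwise, and finish with Bernoulli's inequality as in \eqref{eqn:Bernoullified}. Your version is in fact slightly more careful than the paper's (you verify the applicability of Bernoulli's inequality and extract an explicit length $L(q) \sim q^{s/(\gamma+2)}$, whereas the paper fixes the length $n$ first and lets $p \to \infty$), but the underlying idea is identical.
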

\begin{proof}
The proof relates closely to the proof of \eqref{eqn:main-result-g-case-i}. We modify \eqref{eqn:Bernoullified} to obtain:
$$
|\cos (np)|^{(np)^\gamma} \geq \left ( 1 - \frac{(n(p-\pi Q(p))^2}{2}\right )^{(np)^\gamma} > 1 - \frac{n^{\gamma + 2}p^\gamma}{2Q(p)^{2(\mu(\pi)-1-\epsilon)}}
$$
By \eqref{eqn:approximability-inequality}, for any $n \in \mathbf{N}$, we can bring $|\cos (np)|^{(np)^\gamma}$ arbitarily close to $1$, which is greater than $\alpha$. Also observe that for any $a \in \mathbf{N},\ a < n$ we have: 
$$
\frac{n^{\gamma + 2}p^\gamma}{2Q(p)^{2(\mu(\pi)-1-\epsilon)}} > \frac{a^{\gamma + 2}p^\gamma}{2Q(p)^{2(\mu(\pi)-1-\epsilon)}}
$$
Thus for any series of length $n \in \mathbf{N}$, there exists infinitely many integers $p$ such that the sequence $\{|\cos (kp)|^{(kp)^\gamma},\ k=0,1,2,...,n \}$ is bounded below by $\alpha \in (0, 1)$. This completes the proof.
\end{proof}

\begin{definition}\label{def:persistent-subsequences}
    We refer to the subsequences which Theorem~\ref{thm:main-subsequence-theorem} guarantees to exist as \emph{persistent subsequences}.
\end{definition}

The terminology of Definition~\ref{def:persistent-subsequences} stems from the idea that these values of $\cos(n_k)$ are sufficiently close to 1 that not even raising them to the (typically extremely large) number $n_k$ will result in a number near 0.

Numerical experimentation reveals that \textbf{each peak} in Figure~\ref{fig:1000000} corresponds to a separate persistent subsequence 
\begin{align}
    \cos(n_{k})^{n_{k}}, \qquad \text{where } n_{k} = 355k + (3+22j), \quad k\in \bN,
\end{align}
for some fixed $j \in \bN$.
The appearance of $p=355$ is due to its role in the continued fraction expansion of $\pi$ and the fact that $\frac{355}{113}$ is the best rational approximation of $\pi$ with denominator less than $16604$, and the appearance of $22$ is also likely due to $\pi \approx \frac{22}{7}$. This suggests that other persistent subsequences may be found by looking at other exceptionally accurate fractional approximations to $\pi$, for example: $\frac{52163}{16604}$, $\frac{833719}{265381}$ and $\frac{42208400}{13435351}$. Indeed, we find persistent subsequences for $n_k = 833719k$ and $n_k = 833719k+42208400$, as in Figure~\ref{fig:far-out-subsequences}.

\begin{figure}
    \addheight{\includegraphics[width=50mm]{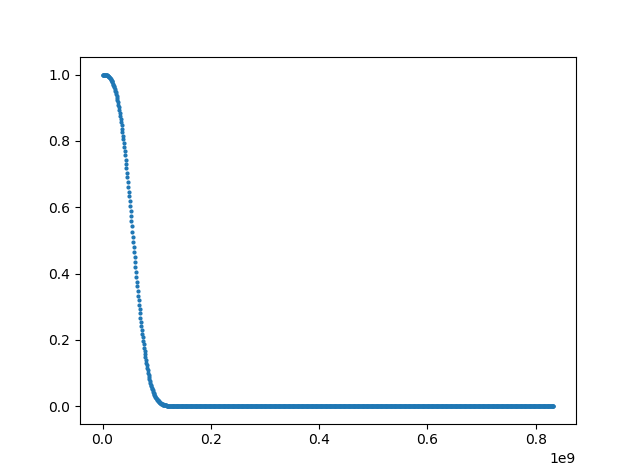}\includegraphics[width=50mm]{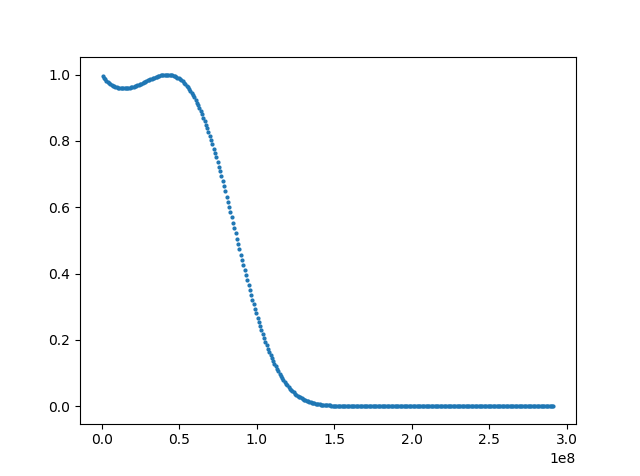}}
    \caption{\small Left: the persistent subsequence for $n_k = 833719k$. Right: the persistent subsequence for $n_k = 833719k+42208400$. Notice: the scale on the horizontal axis is $\times 10^8$.}
    \label{fig:far-out-subsequences}
\end{figure}

Each persistent subsequence appears to have a single peak, of a shape very similar to a Gaussian distribution (for those whose peak is not too close to 0). Curve matching in MATLAB reveals that these peaks match well. For example, the persistent subsequence $\cos(n_k)^{n_k}$ with $n_k = 644+355k$ has a coefficient of determination $R^2 = 0.9983$ with the Gaussian $0.9978 \ee^{-\left(\frac{x-260.8}{84.57}\right)^2}$. 
However, these curves are not truly Gaussian; the following theorem derives the general peak shape we observe in persistent subsequences and is matched against a sample subsequence in Figure~\ref{fig:contin-vs-discrete}.

\begin{theorem}\label{thm:smooth-form-theorem}
   For any arithmetic progression $a_n = pn + d$, the sequence $\{|\cos a_n|^{a_n^\gamma}\}$ lies on the curve 
    \begin{equation}\label{eq:curve-fit}
        f(x) = |\cos((p - \pi Q(p))\alpha(x) + d - \pi Q(d))|^{x^\gamma}.
    \end{equation}
\end{theorem}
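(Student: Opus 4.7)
The plan is to unfold the cosine by subtracting appropriate integer multiples of $\pi$ from $a_n = pn+d$, so that the remaining argument is a linear function of $n$ whose slope and intercept are $p-\pi Q(p)$ and $d-\pi Q(d)$ respectively. The continuous curve $f(x)$ is then obtained by replacing the integer index $n$ with the natural continuous interpolation $\alpha(x) = (x-d)/p$, which satisfies $\alpha(a_n)=n$ and $\alpha(a_n)\cdot p + d = x$.

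Concretely, I would first perform the algebraic decomposition
\begin{equation*}
   pn + d \;=\; (p - \pi Q(p))\, n \;+\; (d - \pi Q(d)) \;+\; \pi\bigl(n\,Q(p) + Q(d)\bigr).
\end{equation*}
Since $nQ(p)+Q(d) \in \bZ$ and $|\cos(y+k\pi)| = |\cos y|$ for every $k \in \bZ$, this yields
\begin{equation*}
   |\cos a_n| \;=\; \bigl|\cos\bigl((p-\pi Q(p))\,n + (d-\pi Q(d))\bigr)\bigr|.
\end{equation*}
Raising both sides to the power $a_n^\gamma$ and using $a_n = pn+d$ on the right-hand exponent gives exactly $f(a_n)$, once we identify $\alpha$ as the affine map that recovers $n$ from $a_n$, namely $\alpha(x) = (x-d)/p$. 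The interpolation is natural because $a_n^\gamma = x^\gamma$ at $x = a_n$, and inserting $\alpha(a_n) = n$ into $f(x)$ reproduces the formula just derived.

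The last step is to verify that $f$ is in fact a well-defined $\bR$-valued function on $x>0$ (so that the discrete values really do lie on a smooth curve), which is immediate from continuity of $\cos$ and of $x \mapsto x^\gamma$ on $(0,\infty)$. I would also remark on the qualitative consequence: the slow-varying amplitude comes from the small number $p-\pi Q(p)$ appearing inside the cosine — so the effective period of the envelope $|\cos((p-\pi Q(p))\alpha(x) + \cdot)|$ is proportional to $1/|p-\pi Q(p)|$, which explains why excellent rational approximations to $\pi$ (small $|p-\pi Q(p)|$) give rise to the broad Gaussian-looking peaks observed numerically.

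I do not expect any serious obstacle here: the statement reduces to a one-line identity plus the $\pi$-periodicity of $|\cos|$. The only mild subtlety is to make explicit the definition of $\alpha$, which is left implicit in the theorem statement; once $\alpha(x)=(x-d)/p$ is fixed, the proof is essentially a direct substitution.
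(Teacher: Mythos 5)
Your proof is correct and follows essentially the same route as the paper's: the same key identity $|\cos(pn+d)| = |\cos(n(p-\pi Q(p)) + (d-\pi Q(d)))|$ followed by the same interpolation $\alpha(x) = (x-d)/p$. The only difference is that you justify the identity explicitly via the decomposition $pn+d = (p-\pi Q(p))n + (d-\pi Q(d)) + \pi(nQ(p)+Q(d))$ and the $\pi$-periodicity of $|\cos|$, a step the paper states without proof.
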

\begin{proof}
The proof begins similarly to the proof of Theorem \ref{thm:main-subsequence-theorem}. Let $\mathbf{S}$ be an arithmetic progression of the form $S_n = pn + d$, where $p, d \in \mathbf{Z}$. Then we have:
   $$
   |\cos (pn + d)|^{(pn + d)^\gamma} = |\cos (n(p - \pi Q(p)) + (d - \pi Q(d)))|^{(pn + d)^\gamma}
   $$
The domain can be extended into the reals via the transformation $\alpha(x) = \frac{x-d}{p}$, and this yields \eqref{eq:curve-fit}.
\end{proof}

\noindent
\begin{figure}
    \addheight{\includegraphics[width=100mm]{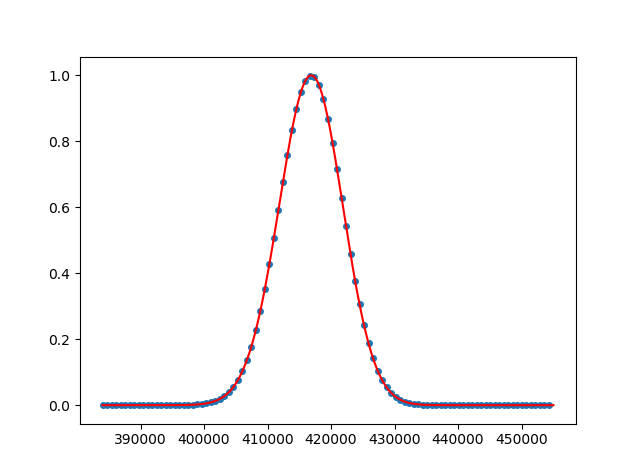}}
    \caption{\small $|\cos(8.49137\cdot10^{-8} x-0.0353982)|^{x^{1.2}}$ (red) plotted against the sequence $|\cos(a_n)|^{a_n^{1.2}}$ (blue).}
    \label{fig:contin-vs-discrete}
\end{figure}




\appendix
\section{Irrationality measure of rational and algebraic numbers}

\begin{theorem}[{\cite[Thm.~E.2]{Bugeaud}}]
   If $\alpha \in \bQ$, then $\mu(\alpha)=1$. 
\end{theorem}
\begin{proof}
    Since $|q\alpha-[q\alpha]| < 1$ for every $q \in \bN$, we have $\mu(\alpha) \geq 1$. If $\alpha = \frac ab$ in lowest terms, and $\alpha \neq \frac pq$, then $|\alpha - \frac pq| \geq \frac1{|bq|}$, whence $\mu(\alpha) = 1$.
\end{proof}

Using continued fraction expansions, one can prove that $\mu(\alpha) \geq 2$ whenever $\alpha \in \bR \setminus \bQ$, but that is beyond the scope of this paper. However, we can show that all Liouville numbers are transcendental. It suffices to prove the following result; our treatment is adapted from \cite{Simmons}.

\begin{theorem}
   For all irrational algebraic numbers $\alpha$ of degree $n$, there exists a constant $c > 0$ such that: $\left | \alpha - \frac{p}{q} \right | \geq \frac{c}{q^n}$ for any $p,q \in \bZ$ with $q > 0$.
\end{theorem}

\begin{proof}
Let $P(x) = \sum^{n}_{k=0} a_k x^k$ be a polynomial of degree $n$ such that $P(\alpha) = 0$, and denote the set of rational roots of $P$ by $R = \{\zeta \in \bQ : P(\zeta) = 0\}$. For a given rational $r = \frac{p}{q}$, we consider the following three cases: (i) $|\alpha-r| \geq 1$, (ii) $r \in R$, and (iii) 
$r \notin R$ but $|\alpha-r| \leq 1$.

\textit{Case (i).} For any $r=\frac pq$ satisfying $|\alpha-r| \geq 1$, we have $|\alpha-r| \geq \frac{1}{q^n}$, and $c=1$ would work.

\textit{Case (ii).} For $r \in R$, we can define $\xi > 0$ by
\[\xi = \begin{cases}
    \min \{|\alpha-r| : r \in R\},& R \neq \varnothing \\
    1, & R = \varnothing,
    \end{cases}\]
and we immediately have $|\alpha-r| \geq \xi \geq \frac{\xi}{q^n}$.

\textit{Case (iii).} For $r \notin R$, we know that $P(r)$ is some multiple of $\frac{1}{q^n}$. Since $r \notin R$, this implies 
\begin{align}\label{eqn:P-dif-bound}
    |P(\alpha)-P(r)| = |P(r)| \geq \frac{1}{q^n}.
\end{align}
Observe that  
\[\alpha^k-r^k = (\alpha-r)\sum^{k-1}_{i=0} r^i \alpha^{k-1-i}.\]
Solving this identity for $r^k$ and substituting it into the definition of $P$ leads to
\[P(\alpha)-P(r) = (\alpha-r)\sum^{n}_{k=1}a_k \sum^{k-1}_{i}r^i\alpha^{k-1-i}.\]
Recall that for this case we have also assumed $|\alpha-r| \leq 1$. For such $r$, we have $|r| \leq |\alpha| + 1$, and so for $c_\alpha$ defined by
\[c_\alpha = \sum^{n}_{k=1}|a_k|k(|\alpha|+1)^{k-1},\]
we get $|P(\alpha)-P(r)| \leq |\alpha-r|c_\alpha$ by the triangle inequality.
In combination with \eqref{eqn:P-dif-bound}, this gives
\[|\alpha-r| 
    \geq \frac{|P(\alpha)-P(r)|}{c_\alpha} 
    \geq \frac{1}{c_\alpha q^n}.
\]
For $c = \min \left (1, \xi, \frac{1}{c_\alpha} \right )$, all cases are covered.
\end{proof}

\section*{Acknowledgements}
The first author would like to thank the second author for suggesting this problem and assisting greatly with the development of the paper and for the second author's continued mentoring and guidance. The first author would also like to thank Math StackExchange user ``T. Bongers'' \cite{TBongers} for his/her/their insight on a more specific version of the problem. The second author acknowledges that the first author did all the hard work, and is grateful to Jordan Rowley for raising the question about $\sin(n)^n$ that led to the current work ($\cos(n)^n$ turned out to be more convenient to study). 
Both authors would like to thank the referee for helpful comments which we feel improved the paper significantly.

\end{document}